\DeclareMathOperator{\diag}{diag}
\newtheorem{theorem}{Theorem}
\newtheorem{proposition}{Proposition}
\theoremstyle{definition}
\theoremstyle{remark}
\newtheorem{remark}{Remark}
\newcommand{\bmat}{\left[ \begin{matrix}}
	\newcommand{\emat}{\end{matrix} \right]}
\DeclareMathOperator{\trace}{tr}
\newcommand{\Rbb}{\mathbb R}
\newcommand{\xb}{\mathbf  x}
\newcommand{\yb}{\mathbf  y}
\newcommand{\wb}{\mathbf  w}
\newcommand{\eb}{\mathbf  e}
\newcommand{\ub}{\mathbf  u}
\newcommand{\zerob}{\mathbf 0}
\newcommand{\Ib}{\mathbf I}
\newcommand{\Lb}{\mathbf L}
\newcommand{\Pb}{\mathbf P}
\newcommand{\Sb}{\mathbf S}
\newcommand{\Ub}{\mathbf U}
\newcommand{\Yb}{\mathbf Y}
\newcommand{\Zb}{\mathbf Z}
\newcommand{\Xb}{\mathbf  X}
\newcommand{\Wb}{\mathbf W}
\newcommand{\mub}{\boldsymbol{\mu}}
\newcommand{\Gammab}{\boldsymbol{\Gamma}}
\newcommand{\Sigmab}{\boldsymbol{\Sigma}}
\newcommand{\Deltab}{\boldsymbol{\Delta}}
\newcommand{\Lambdab}{\boldsymbol{\Lambda}}
\newcommand{\sign}[1]{\mathrm{sign}(#1)}
\newcommand{\Qcal}{\mathcal{Q}}
\newcommand{\Acal}{\mathcal{A}}
\newcommand{\Zcal}{\mathcal{Z}}
\newcommand{\Dcal}{\mathcal{D}}
\newcommand{\KL}{\mathrm{KL}}
\newcommand{\F}{\mathrm{F}}
\newcommand{\vrm}{\mathrm{v}}
\newcommand{\trm}{\mathrm{t}}
\newcommand{\opt}{\text{opt}}
\title{\LARGE \bf
$\ell_0$ FACTOR ANALYSIS}
\author{Linyang Wang, Wanquan Liu, and Bin Zhu
\thanks{This work was supported in part by Shenzhen Science and Technology Program (Grant No.~202206193000001-20220817184157001), the Fundamental Research Funds for the Central Universities, and the ‘‘Hundred-Talent Program’’ of Sun Yat-sen University.}
\thanks{The authors are with the School of Intelligent Systems Engineering, Sun Yat-sen University, Gongchang Road 66, 518107 Shenzhen, China. 
Emails: {\tt\small wangly227@mail2.sysu.edu.cn} (L. Wang),
{\tt\small \{liuwq63, zhub26\}@mail.sysu.edu.cn} (W. Liu and B. Zhu).
}
}
\begin{document}

\maketitle
\thispagestyle{empty}
\pagestyle{empty}

\begin{abstract}

Factor Analysis is about finding a low-rank plus sparse additive decomposition from a noisy estimate of the signal covariance matrix.
In order to get such a decomposition, we formulate an optimization problem using the nuclear norm for the low-rank component, the $\ell_0$ norm for the sparse component, and the Kullback--Leibler divergence to control the residual in the sample covariance matrix. 
An alternating minimization algorithm is designed for the solution of the optimization problem.
The effectiveness of the algorithm is verified via simulations on synthetic and real datasets.
\end{abstract}

\section{Introduction}


Factor Analysis (FA) is a classic topic in Psychology, Econometrics, Signal Processing, Machine Learning, and Control, see e.g., \cite{bai2002determining,lam2012factor,bottegal2014modeling,bertsimas2017certifiably,zorzi2017sparse,ciccone2018factor} and the references therein. More specifically, it concerns the following observational model:
\begin{equation}\label{generate_model}
	\yb_i = \mub + \Gammab \ub_i + \wb_i,\quad i=1,2,\dots,N,
\end{equation}
where $\yb_i\in\Rbb^p$ is an observed vector, $\mub\in\Rbb^p$ is a mean vector, $\Gammab\in\Rbb^{p\times r}$ is a ``factor loading'' matrix having linearly independently columns, the random vector $\ub_i\sim N(0,\Ib_r)$ stands for the hidden factors, and $\wb_i\sim N(0,\Sb^*)$ is the additive noise (with an \emph{unknown} covariance matrix $\Sb^*$) independent of $\ub_i$. It is a widely used form of linear dimensionality reduction because typically one has $r\ll p$. Given $N$ i.i.d.~samples $\yb_i$ from the model, the problem is to estimate the loading matrix $\Gammab$, or equivalently, the rank $r$ matrix $\Lb^*=\Gammab\Gammab^\top\in\Rbb^{p\times p}$ and the noise covariance matrix $\Sb^*$.

Assume for simplicity that the mean vector $\mub=\zerob$. It is easy to calculate the covariance matrix of $\yb_i$ as 
\begin{equation}\label{Sigma_decomp}
	\Sigmab = \Gammab\Gammab^\top+\Sb^* = \Lb^* + \Sb^*.
\end{equation}
In the special case where $\Sb^*=\sigma^2 \Ib_p$, the problem reduces to the standard \emph{Principal Component Analysis} (PCA). A typical assumption in FA is $\Sb^*$ being diagonal. Then \eqref{Sigma_decomp} becomes a kind of ``low-rank plus sparse'' matrix decomposition. In practice, of course the covariance matrix $\Sigmab$ must be replaced by its estimate from samples, say
\begin{equation}\label{sample_cov}
	\hat{\Sigmab} = \frac{1}{N} \sum_{i=1}^{N} \yb_i \yb_i^\top = \Lb^* + \Sb^* + \Wb,
\end{equation}
where $\Wb$ is a residual matrix. \emph{Throughout this paper, we assume that the estimate $\hat{\Sigmab}$ is positive definite.}

The recent paper \cite{ciccone2018factor} casts FA as a constrained convex optimization problem.  Their idea is to find a covariance matrix $\Sigmab^*$ in the ``neighborhood'' of $\hat{\Sigmab}$, as described by the \emph{Kullback--Leibler (KL) divergence}, such that the decomposition \eqref{Sigma_decomp} has (possibly) a minimum-rank $\Lb^*$ and a diagonal $\Sb^*$.
In addition, the \emph{nuclear norm} is used as a relaxation of the rank function in order to make the problem tractable.
Inspired by their work, in the current paper we shall relax the constraint that the component $\Sb$ is diagonal, and instead require $\Sb\succ 0$ to be sparse as measured by the most natural $\ell_0$ norm. 
In order to solve the resulting nonconvex nonsmooth optimization problem, we propose an alternating minimization scheme for the iterative updates of $\Lb$ and $\Sb$. Simulations on synthetic and real data show that our algorithm is effective and robust in finding the number of hidden factors, i.e., the rank of $\Lb^*$.


\subsection{Some related works}

The recovery of the matrix pair $\Lb^*, \Sb^*$ from the noisy estimate $\hat{\Sigmab}$ is reminiscent of the extensive research on robust PCA, see \cite{candes2011robust,chandrasekaran2011rank,hsu2011robust,agarwal2012noisy,wen2019robust,chen2021bridging}. 
However, these works mostly use the $\ell_1$ norm as a convex surrogate of the $\ell_0$ norm to enforce sparsity. Moreover, they do not pay special attention to covariance matrices, and in general, rectangular matrices are allowed.

Another related work is \cite{marjanovic2015l0} which uses the $\ell_0$ norm for inverse covariance estimation.
	In the literature, there have also been research based on proximity operators to ensure the sparse property of the optimization variable,
	such as $\ell_q$ thresholding \cite{marjanovic2012l_q}, hard thresholding \cite{ulfarsson2015sparse}, and $q$-shrinkage \cite{woodworth2016compressed}. In this paper, however, the method utilized  to handle the $\ell_0$ norm is distinct from the techniques involving proximity operators. 

\subsection{Notation}
Bold uppercase letters like $\Xb$ represent matrices, and bold lowercase letters like $\xb$ are reserved for vectors.
Given a square matrix $\mathbf{X}_{p\times p}=[x_{ij}]$, 
$\Vert \Xb\Vert_{\F}$ denotes the Frobenius norm. 
The $i$-th column of $\Xb$ is $\mathbf{x}_{[i]}$. 
We write $\mathbf{X}\succeq 0 $ and $\mathbf{X}\succ 0 $ to indicate that $\mathbf{X}$ is positive semidefinite and positive definite, respectively. 
The indicator function $\mathbb{I} (\cdot)$ returns $1$ if the statement in the parenthesis is logically true and $0$ otherwise. 
$\mathbf{e}_i$ is a unit column vector with the $i$-th entry as $1$ and all other entries as $0$. 
$\mathbf{U}_{ij}$ denotes a matrix with two unit columns $\bmat\mathbf{e}_i & \mathbf{e}_j\emat$.


\section{Problem Formulation}\label{sec:prob}

In this work, we consider the following optimization problem for the additive matrix decomposition in accordance with \eqref{sample_cov}:
\begin{subequations}\label{opt_formulation}
	\begin{align}
		& \underset{\Sigmab, \mathbf{L}, \mathbf{S}}{\operatorname{min}} & & \operatorname{tr} (\mathbf{L})+\lambda\|\mathbf{S}\|_0 \\
		& \ \ \text{s.t.} & &\mathbf{L}\succeq 0, \mathbf{S} \succ 0 \label{constraint_positive} \\
		& & & \mathbf{\Sigma}=\mathbf{L}+\mathbf{S} \label{constraint_additive}\\
		& & & \mathcal{D}_{\mathrm{KL}} (\mathbf{\Sigma} \| \hat{\mathbf{\Sigma}})\leq \delta,
	\end{align}
\end{subequations}
where, all matrices are $p$ by $p$, $\lambda$ and $\delta$ are positive parameters,
\begin{itemize}
	\item $\trace(\Lb)=\|\Lb\|_\star$ is the nuclear norm for positive semidefinite matrices, 
	\item $\|\mathbf{S}\|_0=\sum_{i=1}^{p} \sum_{j=1}^{p} \mathbb{I}(s_{ij}\neq0)$ is the elementwise $\ell _0$-norm which counts the number of nonzero entries in $\mathbf{S}$,
	\item $\hat{\Sigmab}\succ 0$ is the sample covariance matrix in \eqref{sample_cov},
	\item 
		$\Dcal_{\KL}(\Sigmab||\hat{\Sigmab}) := 
		\log\det(\Sigmab^{-1} \hat{\Sigmab}) + \trace(\Sigmab\hat{\Sigmab}^{-1}) -p$
	is the KL divergence between two positive definite matrices.
\end{itemize}


The idea is to find a covariance matrix $\mathbf{\Sigma}$, close to its estimate $\hat{\mathbf{\Sigma}}$ as measured by the KL divergence, achieving the combined objective of having a low-rank $\mathbf{L}^*,$ and a sparse $\mathbf{S}^* $ in the additive decomposition \eqref{Sigma_decomp}. 
One can alternatively deal with a Lagrangian-type formulation
\begin{equation}\label{lagrange-type}
	\underset{\Sigmab, \mathbf{L}, \mathbf{S}}{\operatorname{min}} \ \operatorname{tr}(\mathbf{L})+\lambda\|\mathbf{S}\|_0+
	\mu \mathcal{D}_{\mathrm{KL}}(\mathbf{\Sigma}\| \hat{\mathbf{\Sigma}}) \\
	\ \text { s.t. }\ \eqref{constraint_positive}\ \text{ and }\ \eqref{constraint_additive}
\end{equation}
where $\mu >0 $ is another (regularization) parameter.
The above problem \eqref{lagrange-type} will be the focus of investigation in the remaining part of the paper.
It is more convenient to eliminate the variable $\mathbf{\Sigma}$ by the sum of $\mathbf{L}$ and $\mathbf{S}$, yielding the equivalent form
\begin{equation}\label{Object}
	\underset{\mathbf{L}\succeq 0, \mathbf{S}\succ 0}{\min}\ 
	H(\mathbf{L},\mathbf{S}) := f(\mathbf{L},\mathbf{S})+\lambda\Vert \mathbf{S}\Vert _0
\end{equation}
where 
\begin{equation}
	f(\mathbf{L},\mathbf{S}) := \trace(\mathbf{L})+\mu\left[ \operatorname{tr}(\mathbf{L}+\mathbf{S}) \hat{\mathbf{\Sigma}}^{-1}- \log \operatorname{det}(\mathbf{L}+\mathbf{S})\right].
\end{equation}
	Notice that the non-convexity and non-differentiability of the objective function $H (\mathbf{L},\mathbf{S})$ is solely because of the term $\Vert \mathbf{S}\Vert _0$.
	In fact, the function $f(\mathbf{L},\mathbf{S})$ is smooth and convex separately in $\Lb$ or $\Sb$ when the other variable is held fixed.  
	Moreover, for a fixed  $\mathbf{S}$, $f(\Lb, \Sb)$ is strictly convex in $\Lb$.

\begin{remark}
	In comparison with the problem formulation in \cite{ciccone2018factor}, we do not impose the diagonal constraint on the sparse structure of $\Sb$ in \eqref{opt_formulation}. In this respect, our formulation is expected to be more flexible and realistic.
\end{remark}

	
\begin{theorem}\label{thm_solution_compare}
		Suppose that the $\ell_0$ regularization term $\lambda\|\Sb\|_0$ in \eqref{Object} is replaced by the $\ell_1$ regularization term $\tau\|\Sb\|_1$, and that $\Qcal_{\ell_1}(\tau)$ is the set of global minimizers of the $\ell_1$ relaxation of the problem \eqref{Object} with a parameter $\tau > 0$. Let $\Qcal_{\ell_0}(\lambda)$ be the set of all local minimizers of \eqref{Object} with a parameter $\lambda$. Then for any $(\hat \Lb, \hat \Sb)\in \Qcal_{\ell_1}(\tau)$, we have $ (\hat \Lb, \hat \Sb) \notin  \Qcal_{\ell_0} (\lambda)$ for any $\lambda>0$.
\end{theorem}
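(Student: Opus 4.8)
The plan is to prove the statement directly: at \emph{any} global minimizer $(\hat\Lb,\hat\Sb)$ of the $\ell_1$ relaxation I will exhibit, for the $\ell_0$ problem \eqref{Object}, an explicit perturbation direction that strictly decreases $H$ while leaving the sparsity pattern of $\Sb$ intact, so that $(\hat\Lb,\hat\Sb)$ cannot be a local minimizer of \eqref{Object}, regardless of the value of $\lambda>0$. The guiding intuition is that the $\ell_1$ penalty \emph{shrinks} the nonzero entries of $\Sb$ away from the (constrained) minimizer of the smooth part $f(\hat\Lb,\cdot)$, whereas the $\ell_0$ term only sees the support and not the magnitudes; hence enlarging the nonzero entries of $\hat\Sb$ back toward the minimizer of $f$ decreases $f$ at no cost in $\lambda\|\Sb\|_0$.

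Concretely, fix $(\hat\Lb,\hat\Sb)\in\Qcal_{\ell_1}(\tau)$ (there is nothing to prove if this set is empty) and let $\Omega:=\{(i,j):\hat s_{ij}\neq0\}$ be the support of $\hat\Sb$; since $\hat\Sb\succ0$ forces $\hat s_{ii}>0$ for all $i$, we have $\Omega\neq\emptyset$ and $\|\hat\Sb\|_0=|\Omega|$. Let $\Db$ be the symmetric matrix with $(i,j)$-entry $\sign{\hat s_{ij}}$ for $(i,j)\in\Omega$ and $0$ otherwise, and consider $\hat\Sb+t\Db$ for small $t>0$. \emph{First}, I would check that $\hat\Sb+t\Db\succ0$ (a small perturbation of a positive definite matrix) and that $\operatorname{supp}(\hat\Sb+t\Db)=\Omega$ — the entries indexed by $\Omega$ keep their signs while the rest stay exactly zero — so that $\|\hat\Sb+t\Db\|_0=\|\hat\Sb\|_0$. \emph{Second}, because $(\hat\Lb,\hat\Sb)$ is a joint minimizer and $\{\Sb\succ0\}$ is open, $\hat\Sb$ minimizes the convex function $\Sb\mapsto f(\hat\Lb,\Sb)+\tau\|\Sb\|_1$ over $\{\Sb\succ0\}$, hence its one-sided directional derivatives at $\hat\Sb$ along $+\Db$ and along $-\Db$ are both nonnegative; since $\tfrac{d}{dt}\|\hat\Sb\pm t\Db\|_1\big|_{t=0^+}=\pm|\Omega|$ and $f(\hat\Lb,\cdot)$ is differentiable at $\hat\Sb$ (as $\hat\Lb+\hat\Sb\succ0$), these two inequalities together pin down
\begin{equation*}
	\left.\frac{d}{dt}\right|_{t=0^+}\! f(\hat\Lb,\hat\Sb+t\Db)=\mu\,\trace\!\Big(\Db\big[\hat{\Sigmab}^{-1}-(\hat\Lb+\hat\Sb)^{-1}\big]\Big)=-\tau\,\|\hat\Sb\|_0<0 .
\end{equation*}
\emph{Third}, it then follows that $f(\hat\Lb,\hat\Sb+t\Db)<f(\hat\Lb,\hat\Sb)$ for all sufficiently small $t>0$; combining this with $\hat\Lb\succeq0$, $\hat\Sb+t\Db\succ0$, and $\|\hat\Sb+t\Db\|_0=\|\hat\Sb\|_0$ yields $H(\hat\Lb,\hat\Sb+t\Db)<H(\hat\Lb,\hat\Sb)$ for every $\lambda>0$, so $(\hat\Lb,\hat\Sb)\notin\Qcal_{\ell_0}(\lambda)$.

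The computation is short; the points requiring care are (i) legitimately extracting the first-order condition in the second step, i.e.\ that at the $\ell_1$ minimizer the one-sided derivatives of the convex objective behave as claimed despite the nonsmoothness of $\|\cdot\|_1$ and the fact that $\Sb$ lives in the open positive definite cone (so that constraint contributes no multiplier); and (ii) verifying in the first step that $\hat\Sb+t\Db$ remains positive definite and keeps \emph{exactly} the same support for small $t$. I expect (ii) to be the real crux, since it is what freezes $\lambda\|\Sb\|_0$ and thereby makes the sign of $\tau$ — not of $\lambda$ — decisive; it is also where the qualitative gap between the $\ell_0$ and $\ell_1$ penalties is exploited, and it is what delivers the conclusion uniformly in $\lambda>0$ using only $\tau>0$ and the separate convexity of $f$ recorded earlier.
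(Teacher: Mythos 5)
Your proof is correct, and it reaches the conclusion by a direct descent argument rather than the paper's comparison of two stationarity systems. The paper first derives, via entrywise perturbations and Fermat's lemma, the necessary condition $y_{ij}=d_{ij}$ on the support of $\Sb$ for any local minimizer of the $\ell_0$ problem, then writes the $\ell_1$ stationarity condition $\mu d_{ij}-\mu y_{ij}+\tau\,\sign{\hat s_{ij}}=0$ and observes the two cannot hold simultaneously (e.g.\ on the diagonal, where $\hat s_{ii}>0$ forces $\tau=0$). You instead never touch the $\ell_0$ optimality conditions: you aggregate the $\ell_1$ first-order information into the single direction $\Db$ (the sign pattern on the support), use the $\pm\Db$ one-sided derivatives to pin down $\langle\nabla_{\Sb}f(\hat\Lb,\hat\Sb),\Db\rangle=-\tau\|\hat\Sb\|_0<0$, and exhibit a support-preserving perturbation that strictly decreases $H$ for every $\lambda>0$. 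This buys two things: it sidesteps the slight imprecision in the paper's subgradient equation (taking $\Gammab_{ij}=\sign{\hat s_{ij}}$ is only legitimate on the support, which is all either argument actually needs), and it gives a self-contained certificate of non-local-minimality that is manifestly uniform in $\lambda$. What the paper's route buys in exchange is the explicit necessary condition $y_{ij}=d_{ij}$ on the support for $\ell_0$ local minimizers, which is of independent interest as a fixed-point/optimality characterization. Your steps (i) and (ii) are indeed the only points needing care, and both are handled correctly (openness of the positive definite cone for feasibility of $\pm t\Db$, and the fact that $\Db$ moves each nonzero entry away from zero so the support, hence $\lambda\|\Sb\|_0$, is frozen).
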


In plain words, Theorem \ref{thm_solution_compare} says that any solution to the $\ell_1$ relaxation of the problem \eqref{Object} will not be a local minimizer of the original problem \eqref{Object} no matter what the regularization parameter $\lambda$ is chosen.


%
%
%
%
%

\section{Algorithm Development}\label{sec:alg}



We employ an alternating minimization scheme to solve the optimization problem \eqref{Object}. More precisely, we update the current iterates $(\Lb^k, \Sb^k)$ as follows:
\begin{subequations}\label{L-S_update}
	\begin{align}
		\Sb^{k+1} & = \arg\min_{\Sb\succ 0}\ H(\Lb^k, \Sb), \label{S_update}\\
		\Lb^{k+1} & = \arg\min_{\Lb\succeq 0}\ H(\Lb, \Sb^{k+1})
		= \arg\min_{\Lb\succeq 0}\ f(\Lb, \Sb^{k+1}), \label{L_update}
	\end{align}
\end{subequations}
where the last equality is due to the fact that the second term in \eqref{Object} depends only on $\Sb$. The next subsections deal with the two subproblems.


\subsection{Updating $\mathbf{L}^{k+1}$}

For the subproblem \eqref{L_update}, we have the next result.

\begin{proposition}
	When $\Sb$ is held fixed, the objective function $H(\mathbf{L}, \mathbf{S})$ or $f(\mathbf{L}, \mathbf{S})$ is smooth and strictly convex in
	$\mathbf{L}$.
\end{proposition}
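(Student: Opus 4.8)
\emph{Proof proposal.} With $\Sb$ held fixed, $H(\Lb,\Sb)$ and $f(\Lb,\Sb)$ differ only by the additive constant $\lambda\Vert\Sb\Vert_0$, so it suffices to prove the statement for $f$. The plan is first to isolate the part of $f$ that genuinely depends on $\Lb$. Expanding the trace term, as a function of $\Lb$ we can write
\[
	f(\Lb,\Sb) \;=\; \trace\!\big((\Ib_p+\mu\hat{\Sigmab}^{-1})\Lb\big) \;+\; \mu\,\trace(\Sb\hat{\Sigmab}^{-1}) \;-\; \mu\log\det(\Lb+\Sb),
\]
where the first term is linear in $\Lb$, the second is a constant, and the entire curvature is carried by $-\mu\log\det(\Lb+\Sb)$.

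For smoothness, observe that the feasible set of this subproblem is the closed convex cone $\{\Lb\succeq 0\}$, and since $\Sb\succ 0$ we have $\Lb+\Sb\succeq\Sb\succ 0$ for every feasible $\Lb$; thus $\Lb+\Sb$ always stays in the open cone of positive definite matrices, on which $\log\det$ is real-analytic. Hence $f(\cdot,\Sb)$ (and therefore $H(\cdot,\Sb)$) is $C^\infty$ on the open set $\{\Lb:\Lb+\Sb\succ 0\}$, which contains $\{\Lb\succeq 0\}$; in particular it is smooth there.

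For strict convexity I would argue via the second derivative along segments. Fix a feasible $\Lb$ and a nonzero symmetric direction $\Deltab$, and set $\phi(t)=f(\Lb+t\Deltab,\Sb)$. The linear and constant parts contribute nothing to $\phi''$, and differentiating $-\mu\log\det$ twice gives
\[
	\phi''(t) \;=\; \mu\,\trace\!\big((\Lb+\Sb+t\Deltab)^{-1}\Deltab(\Lb+\Sb+t\Deltab)^{-1}\Deltab\big) \;=\; \mu\,\big\Vert (\Lb+\Sb+t\Deltab)^{-1/2}\Deltab(\Lb+\Sb+t\Deltab)^{-1/2}\big\Vert_\F^2,
\]
which is strictly positive for every $t$ with $\Lb+t\Deltab$ feasible, because $(\Lb+\Sb+t\Deltab)^{-1/2}$ is invertible and $\Deltab\neq 0$. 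Since this holds along every segment inside $\{\Lb\succeq 0\}$, $f(\cdot,\Sb)$ is strictly convex there. (Equivalently, one may simply invoke the standard fact that $-\log\det$ is strictly convex on the positive definite cone, note that $\Lb\mapsto\Lb+\Sb$ is an injective affine map so the composition stays strictly convex, and observe that adding a linear term preserves this.)

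The only delicate point — and it is a mild one — is ensuring that $\Lb+\Sb$ never reaches the boundary of the positive definite cone, where $\log\det$ and its derivatives would blow up; this is precisely where the hypothesis $\Sb\succ 0$ (rather than $\Sb\succeq 0$) is used, and it is what makes both the smoothness and the strict positivity of $\phi''$ go through. I would present the Hessian computation explicitly, since the formula for $\phi''$ (i.e., $\nabla^2_{\Lb}f$) will be reused in the analysis of the $\Lb$-update.
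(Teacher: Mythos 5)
Your argument is correct: the paper states this proposition without proof (it relies on the standard facts that $-\log\det$ is strictly convex and smooth on the positive definite cone, that $\Lb\mapsto\Lb+\Sb$ is an injective affine map into that cone because $\Sb\succ 0$, and that the remaining terms are linear in $\Lb$), and your decomposition plus the second-derivative computation $\phi''(t)=\mu\,\trace\!\big((\Lb+\Sb+t\Deltab)^{-1}\Deltab(\Lb+\Sb+t\Deltab)^{-1}\Deltab\big)>0$ is exactly the detailed version of that reasoning. Your observation that $\Sb\succ 0$ keeps $\Lb+\Sb$ away from the boundary of the cone, guaranteeing both smoothness on all of $\{\Lb\succeq 0\}$ and strict positivity of the curvature, is the right point to emphasize; only the closing remark about reusing the Hessian is moot, since the paper solves the $\Lb$-subproblem with an off-the-shelf convex solver.
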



Therefore, the optimization problem \eqref{L_update} for $\mathbf{L}$ can be treated numerically using standard solvers for convex optimization. Here we have used CVX, a package for specifying and solving convex programs \cite{cvx,gb08}.

\subsection{Updating $\mathbf{S}^{k+1}$}

To handle the subproblem \eqref{S_update}, we propose a Coordinate Descent (CD) algorithm inspired by \cite{marjanovic2015l0}.
CD algorithm minimizes one selected entry with all others fixed in each iteration. After a complete round of CD updates for all the entries of $\Sb$, the counter $k$ will be increased by one.
The specific update equation for $\mathbf{S}^k=[s_{ij}^k]$ is given as follows:
\begin{equation}
	\Zb_{ij}\left(s_{i j}^{k+1}\right)=\boldsymbol{\mathbf{S}}^k +
	\left\{\begin{array}{cc}
		\delta\left(s_{i i}^{k+1}\right) \boldsymbol{\mathbf{e}}_i \boldsymbol{\mathbf{e}}_i^\top & \text { if } i=j \\
		\delta\left(s_{i j}^{k+1}\right) \boldsymbol{\mathbf{U}}_{i j} \boldsymbol{\mathbf{U}}_{j i}^\top & \text { otherwise },
	\end{array}\right.
\end{equation}
where $k$ denotes the number of complete rounds for CD, $\mathbf{Z}_{ij}(s_{i j}^{k+1})$ is the matrix with $s_{ij}$ updated in the $k$-th round, and $ \delta\left(s_{i j}^{k+1}\right)=s_{i j}^{k+1}-s_{i j}^k $ denotes the difference.

Next define $\mathbf{Y}^{k}:=(\mathbf{L}^{k}+\mathbf{S}^{k})^{-1}$ and
\begin{equation}
	\begin{aligned}
		{\phi_{ij}\left(s\right)} := 
		& -\mu \log \operatorname{det}\left(\mathbf{L}^k+\mathbf{Z}_{ij}\left(s\right)\right)+\mu s d_{i j} \\
		& + [ \mu s d_{i j} +2 \lambda \cdot \mathbb{I} \left(s \neq 0\right) ]\cdot \mathbb{I} \left(i \neq j \right)
	\end{aligned}
\end{equation}
for any $i,j$, where $d_{ij}$ denotes the {$(i, j)$} element of $\hat{\mathbf{\Sigma}}$, and $\phi_{ij}\left(s\right)$ represents the equivalent function to minimize for $s_{ij}^{k+1}$ while the other elements of $\Sb$ are held fixed. In other words, we have
\begin{equation}\label{argmin_s_ij}
	s_{ij}^{k+1} = \arg\min_{s} H(\mathbf{L}^k,\mathbf{Z}_{ij}(s)) =    \arg\min_{s} \phi_{ij}(s).
\end{equation}



\subsubsection{Minimization of $\phi_{ij}$ when $i = j$}

When one works on the diagonal elements of $\Sb$, the minimization problem in \eqref{argmin_s_ij} reduces to
\begin{equation}
	\arg \min _s \phi_{ii}\left(s\right) 
	= \arg \min _s -\log \operatorname{det}\left(\mathbf{L}^k+\mathbf{Z}_{ii}(s)\right)+d_{i i} s.
	\nonumber
\end{equation}
Clearly in the case of $i=j$, $\phi_{ii}(\cdot)$ is differentiable,
so the minimizers can be obtained by differentiating $\phi(\cdot)$:
\begin{equation}\label{deff-phi}
	\phi'(s)= -[(\mathbf{L}^k+\mathbf{Z}_{ii}(s))^{-1}]_{ii}+d_{i i}  =0.
\end{equation}
With the Sherman-Morrison-Woodbury formula \cite{henderson1981deriving}, we have
\begin{equation}\label{inv_ii}
		\begin{aligned}
		\left(\mathbf{L}^k+\mathbf{S}^k+\delta \mathbf{e}_i \mathbf{e}_i^T\right)^{-1}
				& =\mathbf{Y}^k-\frac{\delta \mathbf{Y}^k \mathbf{e}_i \mathbf{e}_i^\top \mathbf{Y}^k}{1+\delta y_{i i}^k} \\
				& 
		= \mathbf{Y}^k - \frac{\delta \mathbf{y}_{[i]}^k (\mathbf{y}_{[i]}^k)^\top}{(1+\delta y_{i i}^k)}.
			\end{aligned}
\end{equation}
Recall that $ \delta(s)=s -s_{i i}^k $, and then we have
\begin{equation}\label{ii-1}
	\left[(\mathbf{L}^{k}+\mathbf{Z}_{ii}(s))^{-1}\right]_{i i} = \frac{y_{i i}^k}{1+\delta\left(s\right) y_{i i}^k}.
\end{equation}
Substituting \eqref{ii-1} into \eqref{deff-phi} to solve for $s_{ii}^{k+1}$, the minimizer is given by
\begin{equation}\label{sii1}
	m_{i i}=s_{i i}^k+\frac{{y_{i i}^k-d_{i i}}}{y_{i i}^k d_{ii}} .
\end{equation}
Furthermore, we can verify that the matrix $\mathbf{L}^{k}+\mathbf{Z}_{ii}(m_{ii})$ has a positive determinant:
\begin{equation}\label{det_updated_ii}
	\det \left(\mathbf{L}^{k}+\mathbf{Z}(m_{ii})\right)=\operatorname{det}\left(\mathbf{L}^k+\mathbf{S}^k\right)\left( 1+\delta\left(m_{i i}\right) y_{i i}^k\right)>0,
\end{equation}
which is a necessary condition for being positive definite.

\subsubsection{Minimization of $\phi_{ij}$ when $i \neq j$}

For nondiagonal elements of $\Sb$, the problem \eqref{argmin_s_ij} becomes
\begin{equation}
		\arg \min _s \mu[-\log \operatorname{det}\left(\mathbf{L}^k+\mathbf{Z}_{ij}(s)\right)+2d_{i j}s]+2 \lambda \cdot \mathbb{I} (s \neq 0).
	\nonumber
\end{equation}
If $0$ is in the domain of the function $\phi_{ij}(\cdot)$, i.e., $\operatorname{det}(\mathbf{L}^k+\mathbf{Z}_{ij}(0))>0$, then $\phi_{i j}(\cdot)$ has one discontinuous point at $s=0$. Otherwise $\phi_{i j}(\cdot)$ will be smooth everywhere. 
Define the smooth part of $\phi_{i j}(\cdot)$ as 
\begin{equation}
	{g_{ij}(s)} :=\mu[-\log \operatorname{det}\left(\mathbf{L}^{k}+\mathbf{Z}_{ij}(s)\right)+2d_{i j}s]+2 \lambda. 
\end{equation}

Firstly, if $\det(\mathbf{L}^k+\mathbf{Z}_{i j}(0))>0$, the equivalent expression of  $\phi_{i j}(\cdot)$ is 
	$g_{i j}(s)\cdot \mathbb{I} \left(s \neq 0\right) +\left(g_{i j}(s)-2 \lambda\right) \cdot \mathbb{I} \left(s = 0\right)$. 
Obviously the a minimizer of $\phi_{i j}(\cdot)$ is either a minimizer of $g_{i j}(\cdot)$ or $s=0$. Since $g_{i j}(\cdot)$ is strictly convex and differentiable, we take the derivative of $g_{i j}(\cdot)$ to get a unique minimizer and then compare the function value to $\phi_{i j}(0)$. The stationary-point equation is given by
\begin{equation}\label{deff-phi2}
	g_{i j}'(s)= 2\mu[-{[(\mathbf{L}^k+\mathbf{Z}_{i j}(s))^{-1}]_{ij}}+d_{i j} ] =0	.
\end{equation}
Again by the Woodbury formula, we can obtain
\begin{equation}\label{inv_ij}
	\begin{aligned}
		& \left(\mathbf{L}^k+\mathbf{S}^k+\delta \mathbf{U}_{i j} \mathbf{U}_{j i}^\top\right)^{-1} \\
		= &\, \mathbf{Y}^k-\frac{\delta\left[\begin{array}{ll}
				\mathbf{y}_{[i]}^k & \mathbf{y}_{[j]}^k
				\end{array}\right]\left[\begin{array}{cc}
				1+\delta y_{i j}^k & -\delta y_{j j}^k \\
				-\delta y_{i i}^k & 1+\delta y_{i j}^k
				\end{array}\right]\left[\begin{array}{l}
				{\mathbf{y}_{[j]}^k}^\top \\
				{\mathbf{y}_{[i]}^k}^\top
				\end{array}\right]}{-\Delta_{i j}^k \delta^2+2 y_{i j} \delta+1},
		\end{aligned}
\end{equation}
Hence
\begin{equation}\label{ij-1}
	\left[(\mathbf{L}^{k}+\mathbf{Z}_{ij}(s))^{-1}\right]_{i j} = \frac{-\Delta_{i j}^k \delta\left(s\right)+y_{i j}^k}{-\Delta_{i j}^k \delta\left(s\right)^2+2 y_{i j}^k \delta\left(s\right)+1},
\end{equation}
where
\begin{equation}
	\Delta_{i j}^k:=\Delta_{i j}(\mathbf{Y}^k)=y_{i i}^{k} y_{j j}^{k}-{y_{i j}^{k}}^2>0
\end{equation}
is the determinant of a $2\times 2$ submatrix of $\Yb^k$.

\emph{Case (i).} When $d_{ij}= 0$, substituting \eqref{ij-1} into \eqref{deff-phi2}, the minimizer is given by
\begin{equation}\label{m_ij_case(i)}
	m_{i j}=s_{i j}^k+\frac{y_{i j}^k}{\Delta_{i j}^k } .
\end{equation}
Similar to \eqref{det_updated_ii}, the following calculation shows that the updated determinant is positive: $\det\left(\mathbf{L}^{k}  +\mathbf{Z}_{ij}(m_{ij})\right) =$
\begin{equation}
	\det(\mathbf{L}^k+\mathbf{S}^k)\left(-\Delta_{ij}^{k} \delta(m_{ij})^2+2y_{ij}^{k}\delta(m_{ij})+1\right)>0.
\end{equation}
In view of the relation above, the condition $\det(\mathbf{L}^k+\mathbf{Z}_{i j}(0))>0$ is equivalent to $-\Delta_{i j}^k\left(s_{i j}^k\right)^2-2 y_{i j}^k s_{i j}^k+1>0$. The latter condition is computationally easier to check.

\emph{Case (ii).} When $d_{ij}\neq 0$, 
the minimizer is given by
\begin{equation}\label{m_ij_case(ii)}
	m_{i j}=s_{i j}^k+\frac{y_{i j}^k}{\Delta_{i j}^k}+\frac{\Delta_{i j}^k-\sqrt{\left(\Delta_{i j}^k\right)^2+4 d_{i j}^2  y_{i i}^k y_{j j}^k}}{2 \Delta_{i j}^k d_{i j}}.
\end{equation}
Similarly we can also check a positive determinant.

Secondly, consider the case that $0$ is not in the domain of $\phi_{i j}(\cdot)$, i.e., $\det(\mathbf{L}^k+\mathbf{Z}_{i j}(0))\leqslant 0$. The minimizer of $\phi_{i j}(\cdot)$ is equal to $m_{ij}$ which is given by \eqref{m_ij_case(i)} if $d_{ij}=0$, and is given by \eqref{m_ij_case(ii)} otherwise.

%

We can now summarize the above results as follows.
\begin{itemize}
	\item When $\det(\mathbf{L}^k+\mathbf{Z}_{i j}(0))\leqslant 0$, define a mapping 
	\begin{equation}\label{A_map_det_negative}
		\Acal(s_{ij}^{k}):=m_{ij}.
	\end{equation}
	\item When $\det(\mathbf{L}^k+\mathbf{Z}_{i j}(0))> 0$, let
	\begin{equation}\label{sij1}
		{\Acal(s_{ij}^{k})}:=\left\{\begin{array}{cc}
			0 & \text { if } \phi_{i j}(0)<\phi_{i j}(m_{ij}) \\
			m_{i j} \cdot \mathbb{I} \left(s_{i j}^k \neq 0 \right)& \text { if } \phi_{i j}(0)= \phi_{i j}(m_{ij}) \\
			m_{i j} & \text { if } \phi_{i j}(0)>\phi_{i j}(m_{ij})
		\end{array}\right.
	\end{equation}
	\item Update $s_{ij}^{k+1}=\Acal(s_{ij}^k)$.
\end{itemize}


The full algorithm for the problem \eqref{L-S_update} is given next. 

\begin{algorithm}[H]
	\caption{Alternating minimization algorithm for \eqref{lagrange-type}}
	\label{main-alg}
	Input: $\lambda$, $\mu$, $\hat{\Sigmab}$, 
	 an upper bound for the number of iterations $\texttt{maxit}$, tolerance
	 level $\texttt{tol}$. \\
	 Set the iteration counter $k=0$, and initialize $\mathbf{L}^0$, $\mathbf{S}^0$. \\
	Output: the convergent iterate $(\mathbf{L}_{\opt}, \mathbf{S}_{\opt})$.
	\begin{algorithmic}
		\While{the stopping condition does not hold}
	\begin{enumerate}
			\item  Denote the current iterates are $\mathbf{S}^k=[s_{ij}^k]$, $\Sigmab^k=[d_{i j}]$ and $\mathbf{Y}^k=[y_{i j}]=(\mathbf{L}^k+\mathbf{S}^k)^{-1}$;
			\item Update $\mathbf{S}^{k+1}$. For each pair of $(i, j)$, $i,j=1, 2, \cdots, p$, do the following:  
			\begin{enumerate}
				\item[] (2.1) $i=j$. Compute $m_{ii}$ with \eqref{sii1}, and let
				\begin{equation}\label{A_map_diag}
					\mathcal{A}(s_{ii}^{k})=m_{ii}.
				\end{equation}
				\item[] (2.2) $i \neq j$. Compute $m_{i j}$ with \eqref{m_ij_case(i)} if $d_{ij}=0$ and with \eqref{m_ij_case(ii)} otherwise.
				\begin{itemize}
					\item[$\bullet$] If $-\Delta_{i j}^k\left(s_{i j}^k\right)^2-2 y_{i j}^k s_{i j}^k+1>0$, compute map $\Acal (s_{ij}^{k})$ with \eqref{sij1}.
					
					\item[$\bullet$] If $-\Delta_{i j}^k\left(s_{i j}^k\right)^2-2 y_{i j}^k s_{i j}^k+1 \leq 0$,
					compute map $\Acal (s_{ij}^{k})$ with \eqref{A_map_det_negative}.
				\end{itemize}
				\item[] (2.3) Update $s_{ij}^{k+1}$ (and $s_{ji}^{k+1}$ if $i\neq j$) with
				\begin{equation}
					s_{ij}^{k+1}=\mathcal{A} (s_{ij}^{k}).
				\end{equation}
			\end{enumerate}
			\item Update $\mathbf{L}^{k+1}$. Solve \eqref{L_update} for $\mathbf{L}^{k+1}$ using CVX. 
			\item Update $\mathbf{Y}^{k+1}=(\mathbf{L}^{k+1}+\mathbf{S}^{k+1})^{-1}$.
			\item  If $k<\texttt{maxit}$, increase $k$ by $1$.

		\end{enumerate}
		
			\EndWhile \\
			\Return the final iterate $\mathbf{L}^k$, $\mathbf{S}^k$.
	\end{algorithmic}
	
\end{algorithm}

\subsection{Complexity analysis}

The time complexity of Algorithm \ref{main-alg} is primarily determined by the updates of matrices $\Sb$, $\Lb$ and $\Yb$.  Since  $\Sb$ is symmetric, the complexity of updating $\Sb^{k+1}$ in each iteration is $\frac{1}{2}\mathcal{O}( p^2)$. In practice, solving for $\Lb^{k+1}$ using the SeDuMi or SDPT3 solvers in CVX has a complexity of approximately $\mathcal{O}( p^3)$. Additionally, the complexity of computing the inverse matrix  $\mathbf{Y}^{k+1}=(\mathbf{L}^{k+1}+\mathbf{S}^{k+1})^{-1}$ is at most $\mathcal{O}( p^3)$. Therefore, the total complexity of Algorithm \ref{main-alg} in each step is about  $\mathcal{O}( p^3)$.

\subsection{Initialization}\label{subsec:init}

Consider the spectral decomposition of $\hat{\mathbf{\Sigma}}=\Ub \Lambdab \Ub^\top$ where $\Lambdab = \diag(\lambda_{1}, \lambda_{2}, \cdots, \lambda_{p})$ such that $\lambda_{i} >0$, $i=1,2,\dots,p$ are the eigenvalues in decreasing order. 
Let 
$$\tilde\Lambdab = \diag(\lambda_{1}, \lambda_{2}, \cdots, \lambda_{t}, 0, \cdots, 0)$$
with $t<p$, and we initialize $\mathbf{L}^0$ as $\Ub \tilde\Lambdab \Ub^\top$. It is suggested in \cite{marjanovic2015l0} that the initialization of $\Sb$ needs to have sufficient sparsity, so we set $\Sb^0$ as $\mathtt{diag}(\mathtt{diag}(\hat{\mathbf{\Sigma}}-\Ub \tilde\Lambdab \Ub^\top))$, where the operator $\mathtt{diag}$ here refers to the Matlab command.

\subsection{Stopping criteria}\label{subsec:stop}

In our implementation, Algorithm \ref{main-alg} is terminated when the difference between two successive iterates is sufficiently small, i.e.,
\begin{equation}
	\|(\Lb^{k+1}, \Sb^{k+1}) - (\Lb^k, \Sb^k)\|_\F < \texttt{tol}
\end{equation}
where $\texttt{tol}>0$ represents the tolerance level.

\section{Simulation Results}\label{sec:sims}

	In this section we give simulation results of our algorithm on synthetic and real datasets.
	
\subsection{Synthetic data examples}
In this subsection, we evaluate the estimation performance of the algorithm through Monte Carlo simulations. For any fixed parameters $\lambda$  and $\mu$, we repeat the following four steps $100$ times:
\begin{enumerate}
	\item[(1)] According to the FA model~\eqref{generate_model}, randomly generate a diagonal matrix $\Sb\succ0$ and a factor loading matrix $\Gammab$ with $r$ columns. Then 
	generate $N$ samples where the cross-sectional dimension is $p = 40$; 
	\item[(2)] Compute the sample covariance matrix $\hat{\Sigmab}$ using the average in \eqref{sample_cov};
	\item [(3)] Divide the samples  into training set and testing set of equal size, and compute the corresponding score function value to select the parameters combination;
	\item[(4)] Use 
	the procedure described in the previous section to compute $\Lb_{\opt}$ and $\Sb_{\opt}$;
	\item[(5)] Evaluate the numerical rank $r_{\opt}$ of $\Lb_{\opt}$ by applying the scheme proposed in \cite{ciccone2018factor}:
	\begin{equation}\label{factor_estimate}
		r_{\opt}:=\underset{i\leq i_{\max}}{\max}\ \frac{t_i}{t_{i+1}},
	\end{equation}	
	where $t_i$, $i =1,2,\cdots,p$, denotes the $i$-th singular value (eigenvalue) of $\Lb_{\opt}$ in decreasing order, and $i_{\max}$ is defined to be the first $i$ satisfying $t_{i+1}/t_{i}<0.05$.
\end{enumerate}

\begin{figure}[t]
	\begin{minipage}[b]{.48\linewidth}
		\centering
		\centerline{\includegraphics[width=4.9cm]{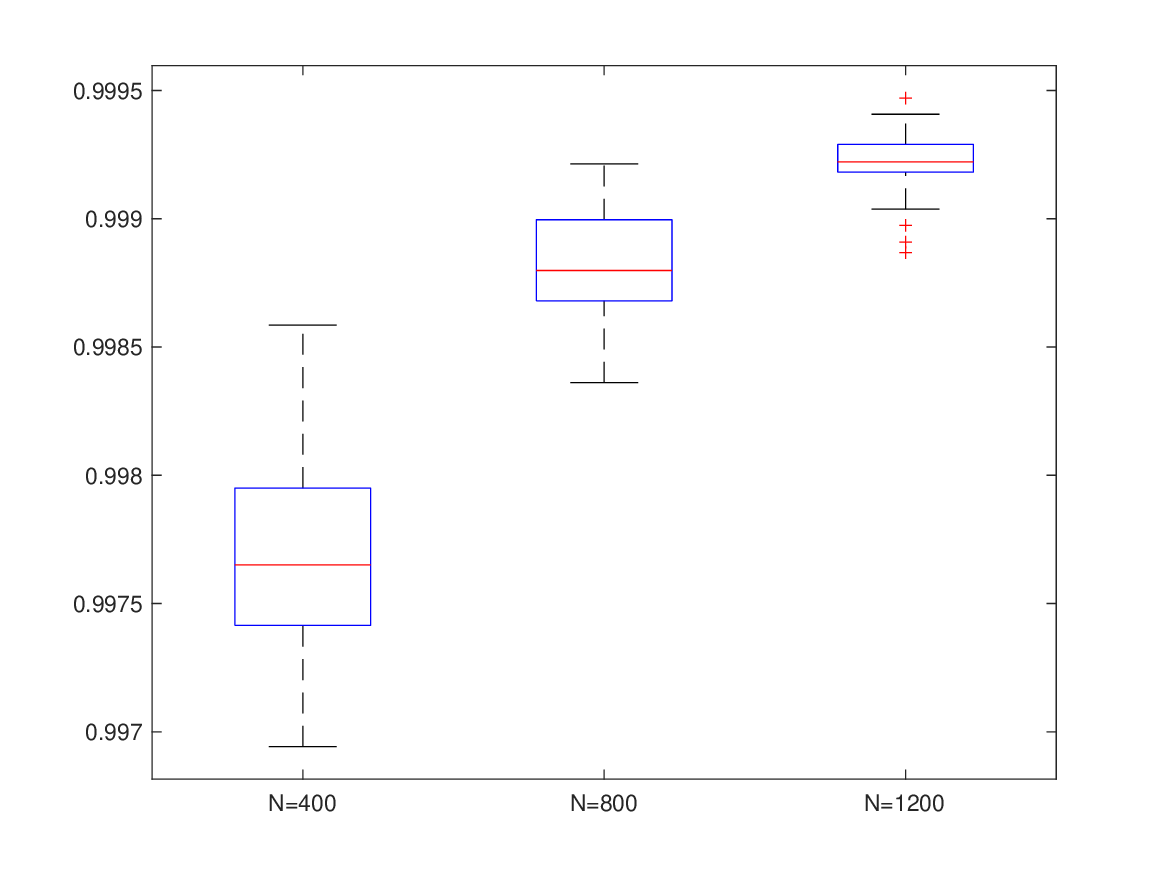}}
		\centerline{(a) $r=5$}
	\end{minipage}
	\hfill
	\begin{minipage}[b]{.48\linewidth}
		\centering
		\centerline{\includegraphics[width=4.9cm]{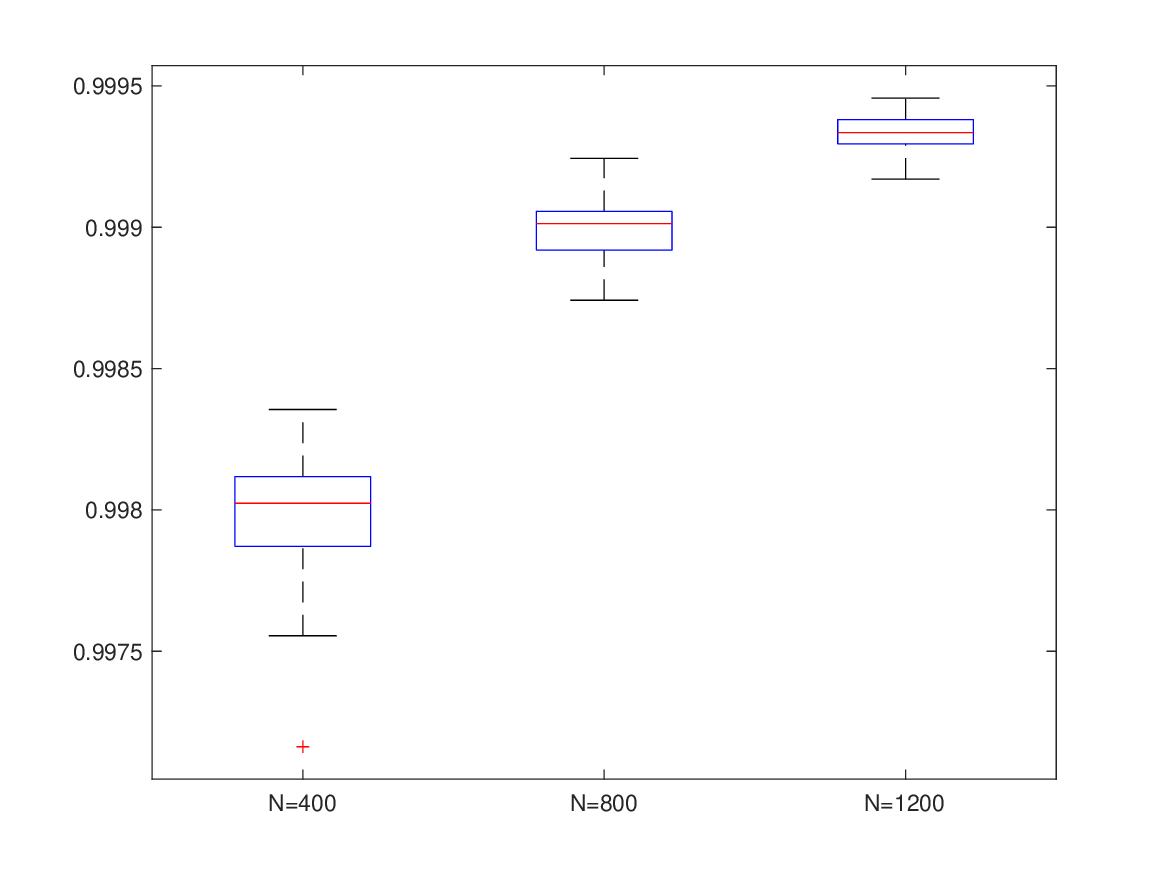}}
		\centerline{(b) $r=10$}
	\end{minipage}
	\caption{Recovery performance as measure by \eqref{ratio} under different sample sizes $N=400, 800, 1200$ for two cases with the true rank $r=5$ and $10$, respectively. 
		The regularization parameters $(\lambda, \mu)$ for each independent trial are selected via the CV procedure on a randomly generated dataset.
		}
	\label{fig:ratio}
\end{figure}
	\noindent\textbf{Parameters Setting.} We adopt the default values $(\texttt{tol}, \texttt{maxit})= (10^{-3}, 10^3)$ for all the algorithms mentioned in this section. We utilize Cross-Validation (CV) to choose the parameter values, with the tuning parameter $\lambda$ and penalty parameter $\mu$ sweeping over the arithmetic progression $\{10,35,60,\cdots,210\}$. We define the following score function and select the parameters combination corresponding to the minimum function value:
	\begin{equation}
		\textrm{score} := (r_{{\Lb}^{\trm}}+\|\Sb_{\trm}\|_{0}) \Dcal_{\KL} (\Lb_{\trm}+\Sb_{\trm} \| \hat{\Sigmab}^{\vrm} ),
	\end{equation}
	where $\hat{\Sigmab}^{\vrm}$ denotes the sample covariance matrix on the validation set, $(\Lb^{\trm}, \Sb^{\trm})$ represents the optimal solution on the training set, and $r_{\Lb^{\trm}}$ is the numerical rank of $\Lb^{\trm}$ computed by \eqref{factor_estimate}. 

\noindent\textbf{Evaluation~Protocol.}
We consider the recovery performance index for the subspace of $\Lb$ proposed in \cite{ciccone2018factor}: 
\begin{equation}\label{ratio}
	\text{ratio}(\mathbf{\Gamma}_{\opt}):= \frac{\operatorname{tr}(\mathbf{\Gamma}^{\top}\Pb\mathbf{\Gamma})}{\operatorname{tr}(\mathbf{\Gamma}^{\top}\mathbf{\Gamma})}
\end{equation}
which has a value between $0$ and $1$. The larger the ratio is, the better the subspaces are aligned. In the above formula, $\Gammab$ is the loading matrix generated in Step (1), $\Gammab_{\opt}$ is such that $\Lb_{\opt}=\mathbf{\Gamma}_{\opt}\mathbf{\Gamma}_{\opt}^{\top}$ which can be computed from the eigenvalue decomposition,
and $\Pb$ is the projection matrix onto the column space of $\Gammab_{\opt}$.
Fig.~\ref{fig:ratio} shows the boxplots of the values of \eqref{ratio} in $100$ repeated trials for the true rank $r=5$ (left panel) and $r=10$ (right panel), respectively. One can see that the recovery ratios are all above $0.99$ and get closer to $1$ as the sample size $N$ increases.

Using a particular instance of the dataset with $N=1200$ and $\Sb=\Ib$, we compare the estimation performance of the $\ell_0$ and $\ell_1$ regularizations on the sparse component $\Sb$. Fig.~\ref{compare_0/1} illustrates that the $\ell_0$ model effectively estimates the diagonal structure of $\Sb$, but the $\ell_1$ model provides a null matrix which is obviously wrong. Fig.~\ref{convergence} compares the \emph{empirical} convergence rate of Algorithm \ref{main-alg} with the ADMM \cite{FA_SAM_24} on the same dataset with $N=1200$, where the value of $(\lambda,  \mu)$ is $(10,160)$. Clearly, the results reveal that the convergence rate of the alternating minimization algorithm is linear while the ADMM is only sublinear.


\begin{figure}[t]
	\begin{minipage}[b]{.48\linewidth}
		\centering
		\centerline{\includegraphics[width=5.0cm]{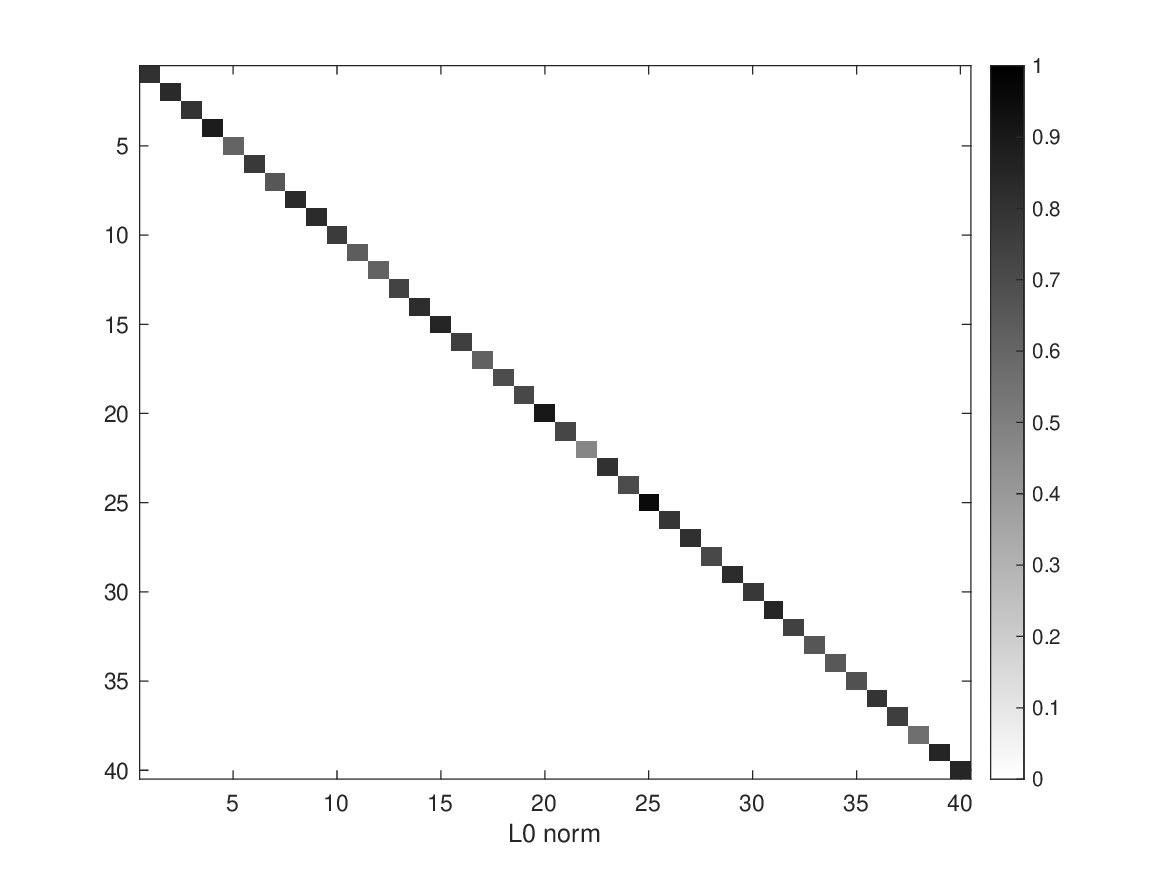}}
	\end{minipage}
	\hfill
	\begin{minipage}[b]{.48\linewidth}
		\centering
		\centerline{\includegraphics[width=5.0cm]{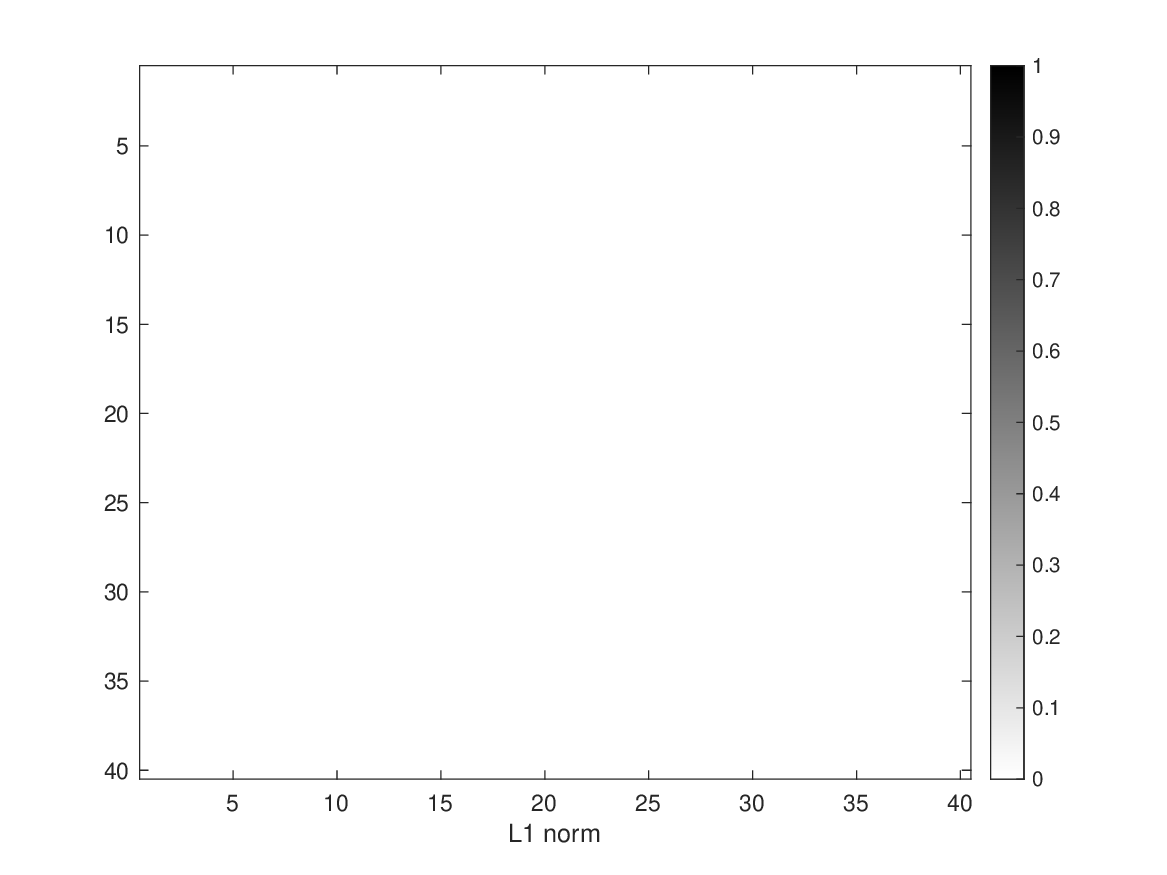}}
	\end{minipage}
		\caption{A comparison of the estimates of $\Sb$ under the $\ell_0$ and $\ell_1$ regularizations when the true $\Sb$ is the identity matrix. The regularization parameters $(\lambda, \mu)$ are $(10, 210)$ in both cases.}
	\label{compare_0/1}
\end{figure}

\begin{figure}[t]
	\centering
	\includegraphics[width=8.1cm]{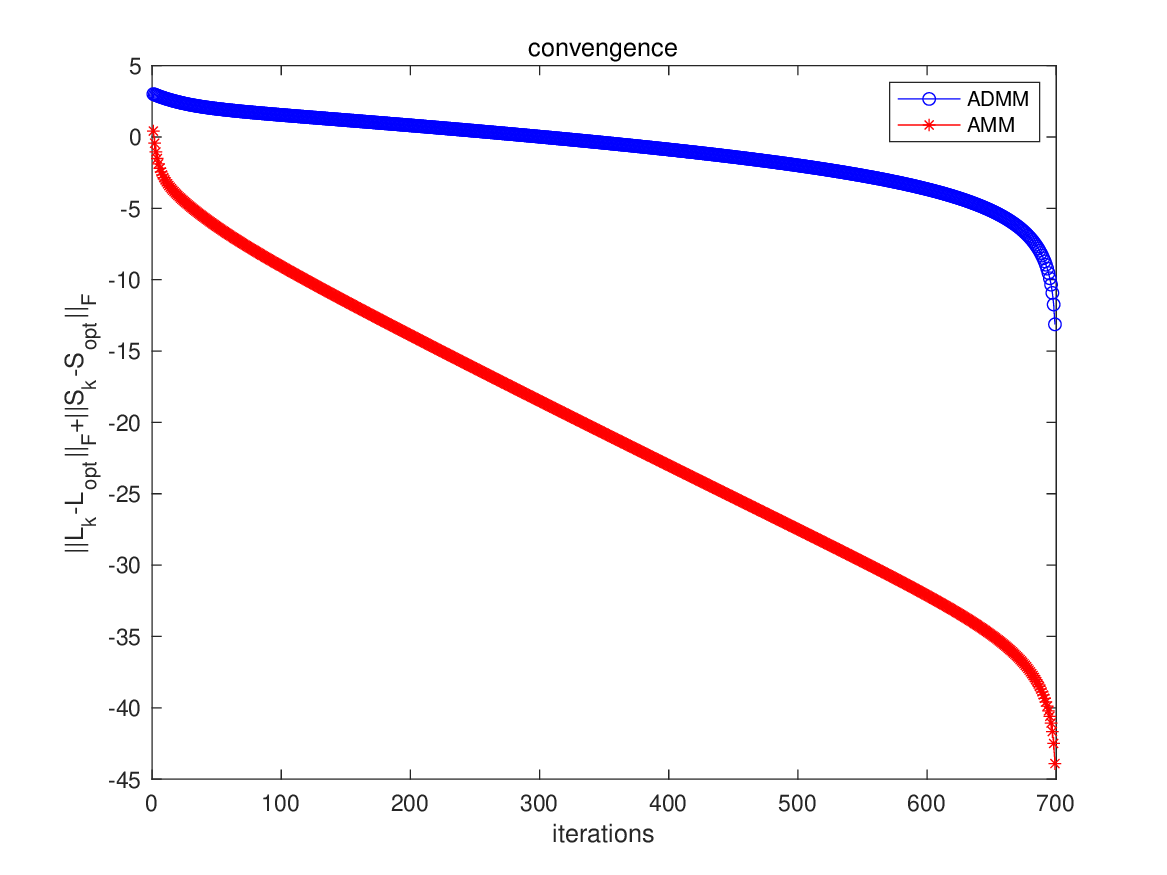}
	
	\caption{The convergence behavior of the proposed  algorithm and ADMM algorithm.
	}
	\label{convergence}
\end{figure}

\subsection{Real data examples}

In this subsection, we analyze a dataset consisting of nine financial indicators ($p=9$) collected from $92$ different sectors ($N = 92$) of the U.S. economy. Each data vector represents the average values for the respective sector\footnote{The dataset was sourced from  \url{http://www.stern.nyu.edu/~adamodar/pc/datasets/betas.xls} (downloaded in May 2023).}.  

There indicators include the beta which represents the systemic risk associated with general market movements, the Hi-Lo risk, the unlevered beta, the unlevered beta corrected for cash, the standard deviation of equity, the standard deviation of operating income, the debt/equity ratio,
the effective tax rate, and the cash/firm value ratio.
\begin{figure}[t]
	\centering
	\includegraphics[width=8.1cm]{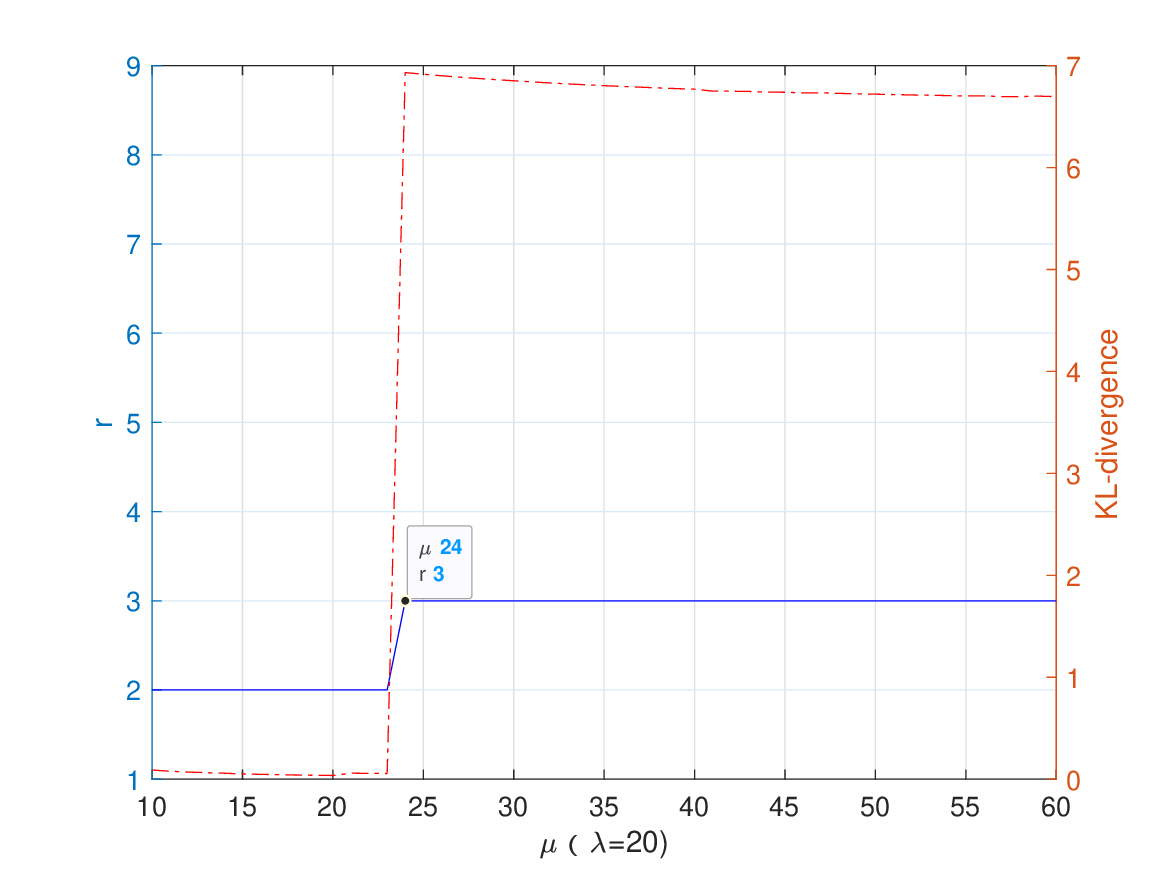}
	
	\caption{Values of $r_{\opt}$ and $\Dcal_{\KL}(\Lb_{\opt}+\Sb_{\opt} || \hat{\Sigmab})$ as a function of $\mu=10, 11, 12, \dots, 60$ under a fixed $\lambda=20$.
	}
	\label{real_rank_kl}
\end{figure}
As revealed from our simulations,
the value of $r_{\opt}$ mainly depends on the parameter $\mu$. Fig.~\ref{real_rank_kl} shows the change of $r_{\opt}$ and
$\Dcal_{\KL}(\Lb_{\opt}+\Sb_{\opt} || \hat{\Sigmab})$
with respect to $\mu$ under a fixed $\lambda=20$. The dashed line represents the KL divergence curve, while the solid line represents the rank curve. We conclude that our algorithm is quite robust in the estimation of the numerical rank, because the rank curve (blue line in Fig.~\ref{real_rank_kl}) is very flat with the only change happening at $\mu=23, 24$. A similar observation can be made from the red dashed curve.


\section{Conclusion}\label{sec:conc}

In this paper, we have considered the additive decomposition problem of low-rank and sparse matrices in Factor Analysis which is formulated as nonconvex nonsmooth optimization problem involving the $\ell_0$ norm and the KL divergence. 
We have proposed an alternating minimization scheme for the solution of the optimization problem.
Our algorithm can give a robust estimate of the number of common factors from the data, which has been verified in the numerical experiments.

\appendix


\begin{proof}[Proof of Theorem~\ref{thm_solution_compare}]
	Suppose that $(\Lb,\Sb)\in  \Qcal_{\ell_0} (\lambda)$. Then for any symmetric perturbation matrix $\Deltab$,
	there exists a positive constant $\epsilon_s$ such that
	\begin{equation}\label{local_min_s}
	H(\Lb, \Sb+\Deltab)\geq H(\Lb, \Sb) \text { with}\ \|\Deltab\|_\F < \epsilon_s,  
	\end{equation}
	where $\Deltab_{p \times p}=\left[\delta_{i j}\right]$.
	Let $(\Lb+\Sb)^{-1}=\left[y_{i j}\right]$ and ${\hat{\Sigmab}}^{-1} = \left[d_{i j}\right]$.
	Building upon the above inequality, next we derive necessary optimality conditions for the problem $ \eqref{Object}$. 
	
	%
	
	Let $\Zcal(\Sb) = \{ (i,j): s_{ij}  \neq 0 \}$ represent the set of indices corresponding to the non-zero elements in $\Sb$. For any $(i,j)\in \Zcal(\Sb)$, we consider 
	\begin{equation}
	\Deltab = 
	\left\{\begin{array}{cc}
	\delta_{ii} \eb_i \eb_i^{\top}& \text { if } i=j \\
	\delta_{ij} \Ub_{ij}\Ub_{ji}^{\top}& \text { if } i\neq j. 
	\end{array}\right.
	\end{equation}
	\begin{itemize}
		\item  If $i=j$, then after straightforward computation we arrive at
		\begin{equation}
		\begin{aligned}
		&H(\Lb,\Sb+\Deltab)-H(\Lb,\Sb) \\
		&=\mu\left[ -\log(1+\delta_{i i} y_{i i}) +\delta_{i i} d_{i i}   \right].
		\end{aligned}
		\end{equation}
		\item  If $i \neq j$, then similarly we have
		\begin{equation}
		\begin{aligned}
		&H(\Lb,\Sb+\Deltab)-H(\Lb,\Sb) \\
		&=\mu\left[ -\log( -c_{i j}\delta_{i j}^2 +2y_{i j} \delta_{i j} +1 ) +2 \delta_{i j} d_{i j} \right]  \\
		&\quad \quad\quad\quad\quad\quad\quad+2 \lambda \mathbb{I}(s_{i j}+\delta_{i j} \neq 0 )-2\lambda \mathbb{I}(s_{i j} \neq 0 ) ,
		\end{aligned}
		\end{equation}
		where $c_{i j}=y_{i i} y_{i j} - y_{i j}^2$.
	\end{itemize}
	
	Now we restrain ourselves to a smaller perturbation inside the ball $\|\Deltab\|_\F<\epsilon_s$ such that $|\delta_{i j}|<\min \{| s_{i j } |,\epsilon_s/2 \}$ for all $(i, j)\in\Zcal(\Sb)$
	and $\delta_{i j}=0$ for $s_{ij}=0$. Then we have $\mathbb{I}(s_{i j}+\delta_{i j} \neq 0 ) = 1$
	when $i\neq j$ and $(i,j)\in\Zcal(\Sb)$. 
	Let us define a function $h\left(\delta_{i j}\right) :=$
	\begin{equation}
	\begin{cases}\mu \left[  -\log \left(1+\delta_{i i} y_{i i}\right)+ \delta_{i i} d_{i i}  \right] & \text { if } i=j \\
	\mu \left[  -\log \left(-c_{i j} \delta_{i j}^2+2 y_{i j} \delta_{i j}+1\right)+2 \delta_{i j} d_{i j} \right] & \text { if } i \neq j\end{cases}
	\end{equation}
	It follows from the condition \eqref{local_min_s} that $h(\delta_{i j})\geq 0$ in a small neighborhood of $\delta_{i j}=0$ such that $h(0)=0$. Therefore, according to Fermat's lemma, we have $h'(0)=0$. The derivative of $h$ is just 
	\begin{equation}
	h'\left(\delta_{i j}\right)=\left\{
	\begin{aligned}
	&d_{i j}+(-y_{i i})/(1+\delta_{i i} y_{i i})  && \text { if } i=j \\
	&2 d_{i j}+\frac{2 c_{i j} \delta_{i j}-2 y_{i j}}{-c_{i j} \delta_{i j}^2+2 y_{i j} \delta_{i j}+1} && \text { if } i\neq j,
	\end{aligned}\right.
	\end{equation}	
	and the stationary-point condition reads as 
	\begin{equation}\label{optimal_cond_l0}
	-y_{i j}+d_{i j}=0,\ \text{for any}\ (i,j)\in \Zcal(\Sb).
	\end{equation}

	Subsequently, we suppose that $(\hat{\Lb},\hat{\Sb})\in \Qcal_{\ell_1}(\tau)$. By a global minimizer, we have
	\begin{equation}\label{l1_subpro_S}
	\hat{\Sb} = \arg\min_{\Sb\succ 0}\ \mu \left[\trace (\hat{\Lb}+\Sb) {\hat{\Sigmab}}^{-1} -\log\det(\hat{\Lb}+\Sb)\right]+\tau \|\Sb\|_1.
	\end{equation}
	Since objective function of \eqref{l1_subpro_S} is convex and continuous with respect to $\Sb$, $(\hat{\Lb},\hat{\Sb})$ satisfies the following stationary-point equation:
	\begin{equation}
	\mu \hat{\Sigmab}^{-1} -\mu (\Lb+\Sb)^{-1}+\tau \Gammab=0,\quad \Gammab_{i j}=\sign{s_{i j}},
	\end{equation}
	which is
	\begin{equation}\label{optimal_cond_l1}
	\begin{cases}\mu d_{i i}-\mu y_{i i}+\tau=0, & i=j \\ \mu d_{i j}-\mu y_{i j}+\tau \sign{s_{i j}}=0, & i \neq j .\end{cases}
	\end{equation}
	If $(\hat\Lb, \hat{\Sb})  \in  \Qcal_{\ell_0} (\lambda)$ were true, it would be required that both \eqref{optimal_cond_l0} and \eqref{optimal_cond_l1} hold simultaneously for some $\tau>0$. However, this is clearly not possible, and the proof is completed.
\end{proof}

\bibliographystyle{IEEEtran}
\bibliography{refs}

\end{document}